\documentclass{article}
\usepackage[margin=1.2in]{geometry}
\title{Local-to-global frames and applications to dynamical sampling problem}
\author{A. Aldroubi\thanks{Department of Mathematics, Vanderbilt University, Nashville TN, USA, email: \texttt{akram.aldroubi@vanderbilt.edu}} \and C. Cabrelli\thanks{Departamento de Matem\'atica, Universidad de Buenos Aires, Argentina, email: \texttt{cabrelli@dm.uba.ar}}\and U. Molter\thanks{Departamento de Matem\'atica, Universidad de Buenos Aires, Argentina, email: \texttt{umolter@dm.uba.ar }}\and A. Petrosyan\thanks{Computational and Applied Mathematics Group, Oak Ridge National Laboratory, Oak Ridge TN, USA, email: \texttt{petrosyana@ornl.gov}}}

\usepackage{hyperref}

\date{\vspace{-5ex}}
\usepackage{amsthm}
\usepackage{ dsfont }
\usepackage{amssymb}
\usepackage{enumerate}
\usepackage{graphicx}
\usepackage{float}
\usepackage{bbm}
\usepackage{amsmath}
\usepackage{comment}
\usepackage{hyperref}
\usepackage{listings}
\usepackage{color}
\usepackage[dvipsnames]{xcolor}

\hypersetup{
    colorlinks,
    citecolor=blue,
    filecolor=blue,
    linkcolor=blue,
    urlcolor=blue
}

\newtheorem{theorem}{Theorem}[section]
\newtheorem*{thm}{Theorem}

\newtheorem{proposition}[theorem]{Proposition}

\newtheorem{corollary}[theorem]{Corollary}
\newtheorem{definition}[theorem]{Definition}

\newtheorem{example}{Example}


\newcommand{\HH}{\mathcal{H}}

\newcommand{\R}{\mathbb{R}}
\newcommand{\Z}{\mathbb{Z}}
\newcommand{\N}{\mathbb{N}}

\newcommand{\G}{\mathcal{G}}

\newcommand {\I} {M}
\newcommand {\Id} {\mathds 1}


\newcommand{\supp}{{\rm supp\,}}

\newcommand {\la} {\langle}
\newcommand {\ra} {\rangle}
\newcommand {\GG} {\mathcal{G}}

\begin{document}
\date{ }
\maketitle

\begin{abstract} In this paper we consider systems of vectors in a Hilbert space $\HH$ of the form $\{g_{jk}: j \in J, \, k\in K\}\subset \HH$ where $J$ and $K$ are countable sets of indices. We find conditions under which the local reconstruction properties of such a system extend to  global stable recovery properties on the whole space. As a particular case, we obtain new local-to-global results for  systems of type $\{A^ng\}_{g\in\GG,0\leq n\leq L }$ arising in the  dynamical sampling problem.
\end{abstract}

\section{Introduction}
The notion of frame  was originally introduced by Duffin and Schaffer \cite{DS52} in the context of non-harmonic analysis, in particular exponential systems of the form $\{e^{2\pi i\lambda _ix}\}_{i\in I}$ where $\{\lambda_i \}_{i\in I}\subset \R$ were considered. However, the notion found wide applications in signal processing research and  it has attracted a great deal of interest from the mathematics as well as the engineering communities during the last several decades. 
\begin{definition} A system of vectors $\mathcal{F}=\{f_i\}_{i\in I}\subset \HH$ is said to be a frame for the Hilbert space $\HH$ if there exist two constants $\alpha,\beta>0$ such that \[\alpha\|f\|^2\leq \sum_{i\in I}|\la f,f_i\ra|^2\leq \beta \|f\|^2,\] for all $f\in \HH$. If only the upper bound holds, then the system is called a Bessel system.
\end{definition}

Frames are a generalization of bases. In particular, if $\{f_i\}_{i\in I}$ is a frame for $\HH$, then any vector $f\in \HH$ has the representation $f =\sum_{i\in I}\la f,f_i\ra\tilde f_i $ where $\{\tilde f_i\}_{i\in I}$ is another frame called the canonial  dual frame. However, unlike bases,
the vectors in a frame $\{f_i\}_{i\in I}$ may have linear dependencies thus resulting in a redundant representation (see \cite{christensen2016book} for more details). If the frame system is not a basis, then  a frame $\{f_i\}_{i\in I}$ has many dual frames other than the canonical dual frame. 

In signal processing models,  often the system $\mathcal{F}$ consists of functions, defined on the domain $\Gamma=\Z$ or $\R$. The functions $f_i$ are usually  localized. By that we mean that every  point in the domain is contained  in the supports of only finitely many functions from the system. This may happen, for example,  when $\mathcal{F}$ is generated by the shifts of a finite number of compactly supported functions.  In this setting, we may be able to divide the domain into smaller patches such that  every function $f$ can be recovered on each patch,  from inner products with those functions from $\mathcal{F}$ whose support overlap with that patch, i.e., $f$ can be recovered from local information.
For that reason, we consider systems $\mathcal{F}$ re-indexed to have the form $\{g_{jk}: j \in J, \, k\in K\}\subset \HH$ where the index   $j \in J$ corresponds to the patch $j$ and the index in $k \in K$ corresponds to indexing within  each patch. 
The index sets $I,J,K$ will always be at most countable.
The main interest in this  note is to find conditions that guarantee stable global reconstruction of any  function on the whole domain $\Gamma$, given that the system has   local recovery properties. In that regard, among other results, we prove the following theorem.
\begin {thm} 
Let $\HH$ be a Hilbert space and  $\{P_j\}_{j\in \N}$ be projectors acting on  $\HH$ satisfying $P_rP_j=0$ for $r\ne j$, and also $\sum_jP_j=\mathds 1$. Assume that $\{P_jg_{jk}: k \in K \}$ form a frame for $P_j\HH$ with uniform frame bounds $\alpha, \beta$, and $\|P_rg_{jk}\|\le |c_k(j-r)|$ with  $c_k \in \ell^1(\Z)$ for all $k \in K$. If $\sum\limits_{k \in K }\|c_k\|_1< \alpha$, then $\{g_{jk}\}_{ j\in J, k\in K}$ is a frame for $\HH$  with frame bounds $\alpha-\sum\limits_{k \in K }\|c_k\|_1$ and $\beta+\sum\limits_{k \in K }\|c_k\|_1$. 
\end {thm}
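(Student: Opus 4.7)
The plan is to exploit the orthogonal decomposition $\HH=\bigoplus_{j} P_{j}\HH$ coming from the pairwise orthogonal projectors $P_j$, and split each coefficient $\langle f,g_{jk}\rangle$ into a diagonal ``local'' part and a cross-patch perturbation. Writing $f_r:=P_r f$ and using $\sum_r P_r=\mathds 1$, $P_r^*=P_r$, and $P_rP_j=0$ for $r\ne j$, I would expand
\[
\langle f,g_{jk}\rangle=\sum_{r}\langle f_r,P_rg_{jk}\rangle=\underbrace{\langle P_jf,P_jg_{jk}\rangle}_{=:\,\mu_{jk}}+\underbrace{\sum_{r\ne j}\langle f_r,P_rg_{jk}\rangle}_{=:\,\eta_{jk}}.
\]

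The first step is to estimate the diagonal contribution using the uniform local frame hypothesis applied to $P_jf\in P_j\HH$: in each patch $\alpha\|P_jf\|^{2}\le\sum_{k}|\mu_{jk}|^{2}\le\beta\|P_jf\|^{2}$, so summing over $j$ together with $\|f\|^{2}=\sum_j\|P_jf\|^{2}$ yields the clean two-sided bound $\alpha\|f\|^{2}\le\sum_{jk}|\mu_{jk}|^{2}\le\beta\|f\|^{2}$.

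The second step controls the cross-patch remainder. Bounding $|\eta_{jk}|\le\sum_{r}\|f_r\|\,|c_k(j-r)|$ makes the right-hand side a convolution of $\phi(r):=\|f_r\|\in\ell^{2}(J)$ with $|c_k|\in\ell^{1}(\mathbb Z)$, so Young's inequality yields $\sum_j|\eta_{jk}|^{2}\le\|c_k\|_1^{2}\|f\|^{2}$; summing over $k$ and using $\|\cdot\|_{\ell^2(K)}\le\|\cdot\|_{\ell^1(K)}$ gives $\big(\sum_{jk}|\eta_{jk}|^{2}\big)^{1/2}\le S\,\|f\|$ with $S:=\sum_k\|c_k\|_1$.

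Finally, I would combine the two estimates. A triangle inequality in $\ell^2(J\times K)$ applied to $\langle f,g_{jk}\rangle=\mu_{jk}+\eta_{jk}$ already delivers $(\sqrt{\alpha}-S)\|f\|\le\big(\sum_{jk}|\langle f,g_{jk}\rangle|^{2}\big)^{1/2}\le(\sqrt{\beta}+S)\|f\|$, so the system is at least a frame whenever $S<\sqrt{\alpha}$. The main technical obstacle I foresee is to sharpen this to the claimed linear-in-$S$ bounds $\alpha-S$ and $\beta+S$ under the weaker assumption $S<\alpha$: this presumably requires expanding $|\mu_{jk}+\eta_{jk}|^{2}=|\mu_{jk}|^2+2\operatorname{Re}(\overline{\mu_{jk}}\eta_{jk})+|\eta_{jk}|^2$ term-by-term and controlling the cross term through a weighted Cauchy--Schwarz that pairs each $\mu_{jk}$ against the \emph{patchwise} frame inequality on $P_j\HH$ rather than absorbing the full $\sqrt{\beta}\|f\|$ factor.
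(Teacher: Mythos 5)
Your decomposition $\la f,g_{jk}\ra=\la P_jf,P_jg_{jk}\ra+\sum_{r\ne j}\la P_rf,P_rg_{jk}\ra$, followed by the triangle inequality in $\ell^2(J\times K)$, the uniform local frame bounds for the diagonal part, and a Young-type convolution estimate for the cross-patch part, is exactly the paper's proof. The only difference is cosmetic: the paper keeps the off-diagonal error as $\bigl(\sum_k\|c_k\|_1^2\bigr)^{1/2}\|f\|$ rather than relaxing it to $S\|f\|$ with $S=\sum_k\|c_k\|_1$ via $\ell^2\le\ell^1$ in $k$, which is marginally sharper but structurally identical. The ``technical obstacle'' you flag at the end is genuine, but you should not feel obliged to resolve it: the paper's own computation also terminates at a lower bound of the form $\bigl(\sqrt{\alpha}-(\sum_k\|c_k\|_1^2)^{1/2}\bigr)\|f\|$ and never derives the advertised constants $\alpha-S$ and $\beta+S$, nor does it justify positivity under the weaker hypothesis $S<\alpha$ (the argument closes only when $(\sum_k\|c_k\|_1^2)^{1/2}<\sqrt{\alpha}$, for which $S<\sqrt{\alpha}$ suffices). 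So your argument proves precisely what the paper's proof actually proves --- a frame with bounds $(\sqrt{\alpha}-S)^2$ and $(\sqrt{\beta}+S)^2$ whenever $S<\sqrt{\alpha}$ --- and the mismatch with the stated constants is an issue with the theorem as written, not with your proposal.
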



As a main application of our results, we study the Dynamical Sampling (DS) problem where one wants to recover an unknown signal $f$ from  spatial-temporal samples $\{\la f,A^ng\ra\}_{g\in\GG,0\leq n\leq L }$
where $L<\infty$, and $\GG$ is a collection of functions defined in $\Gamma.$ Here $\Gamma$ is either $\R$  (continuous case) or $\Z$ (discrete case), and $A$
is an operator acting on $L^2(\Gamma)$.  
This leads to the study of frame  properties of the iterative system 
\begin{equation}\label{eq:itertive_sys}
\{A^ng\}_{g\in\GG,0\leq n\leq L }.
\end{equation}

The dynamical sampling problem  was introduced   in \cite{ADK13}, and it is an active area of research in the applied harmonic analysis community, with a range of potential fields of applications \cite{aldroubi2018noise}.
 Early results in DS considered convolution operators on the spaces $l^2(\Z_d),l^2(\Z)$ and $L^2(\R)$ with the  samples taken on a sparse uniform grid \cite{ADK13, ADK15}. 
 In \cite{Aldkr,tang2017system}, authors allowed the operator $A$ to be unknown too. For an operator theoretic approach to the dynamical sampling problem see \cite{aldroubi2017iterative, ACMT14, aldroubihoang2018frames,aldroubipetr2017dynamical,philipp2017bessel}. The  case when $f\in l^2(\N)$,   $A=B^{-1}DB$ and $D=\sum_j\lambda_j P_j$ is an infinite diagonal matrix can be found in  \cite{ACMT14}. For general bounded normal operators see \cite{aldroubipetr2017dynamical, CMPP2018-several,cabrelli2017finite}. The authors in \cite{christensen2017operator, christensenhas2018frame, christensen2018frame,christensen2018dynamical,
christensen2018operator} consider general frames and investigate conditions under which they can be represented as iterated systems arising in dynamical sampling. In \cite{aldroubi2018phaseless} authors study the problem of phaseless recovery from dynamical samples. The scalability properties of the system  arising in DS is investigated in \cite{aceska2017scalability}.

Our main results in this paper are proved for general class of systems and the results are applied to dynamical sampling.  In particular, we consider dynamical systems where the system $\G$ is localized in space (e.g. when it is generated by the shift of a single compactly supported function),  and   $A$ is a compactly supported convolution operator. More specifically, we obtain the following theorem as corollary of the results in Section \ref{sec:main}. 
\begin{thm}
Let $a$ be a finite sequence, and consider the convolution operator $Af=a\ast f$ for $f \in \ell^2(\Z)$. Let $I=\{-N,\dots,N\}$ and $I_j=I+N_j\subset \Z$ be such that $\cup_jI_j=\Z$, and $I_j\cap I_k=\emptyset$ for   $\vert j-k\vert >M$. Assume that $G_0=\{\chi_{I_j} a^n(\cdot-l): l\in \Omega\subset I\}$ is a frame for $\ell^2(I)$ ($\chi_{I_j}$ is the characteristic function of $I_j$), and let $S_k$ be the shift  operator by  the integer $k$. Then $\cup_j S_{N_j}G_0$ is a frame for $\ell^2(\N)$. 
\end{thm}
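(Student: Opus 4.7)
The plan is to deduce the theorem as a direct corollary of Theorem 1 from the introduction. First I would construct orthogonal projectors $\{P_j\}_j$ satisfying $P_r P_j = 0$ for $r \ne j$ and $\sum_j P_j = \mathds{1}$. Since the given cover $\{I_j\}$ is only ``almost disjoint'' (overlaps occur when $|j-r|\le M$), I would replace it by an associated disjoint partition $\{J_j\}$ compatible with the centers $N_j$, for instance by taking $J_j = \{x \in \Z : N_j \le x < N_{j+1}\}$ so that $J_j \subset I_j$ and $\bigcup_j J_j = \Z$. I then let $P_j$ be the orthogonal projection onto $\ell^2(J_j)$, and re-index the elements of $\bigcup_j S_{N_j} G_0$ as $\{g_{jk}\}_{j,k}$ with $g_{jk} = S_{N_j} g_k$ for $g_k \in G_0$ and $k\in K$ ranging over $\Omega$ (and the iteration index implicit in the definition of $G_0$).

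I would next verify the two hypotheses of Theorem 1. The local frame hypothesis---that $\{P_j g_{jk}\}_{k\in K}$ is a frame for $P_j\HH = \ell^2(J_j)$ with uniform bounds $\alpha,\beta$---follows from the unitarity of $S_{N_j}$: the system $S_{N_j}G_0$ inherits the frame property of $G_0$ on the translated subspace $\ell^2(I_j)$, and one then projects it onto the disjoint core $\ell^2(J_j)$, with uniformity in $j$ automatic because $G_0$ is fixed and the $J_j$ are translates of a common set. For the off-diagonal decay $\|P_r g_{jk}\| \le |c_k(j-r)|$, I would use the finite support of $a$: if $a$ is supported in $\{-R,\dots,R\}$, then $a^n$ is supported in $\{-nR,\dots,nR\}$ for $n\le L$, so each $g_{jk}$ is localized to $I_j$ plus a bounded buffer, and $P_r g_{jk} = 0$ whenever $|j-r|$ exceeds a constant depending only on $N, M, L, R$. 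Consequently each $c_k$ is finitely supported and trivially in $\ell^1(\Z)$.

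The main obstacle is the smallness condition $\sum_{k\in K}\|c_k\|_1 < \alpha$ required to apply Theorem 1. Although $K$ is finite and each $\|c_k\|_1$ is a concrete constant, there is no \emph{a priori} reason for the total to fall below the local lower frame bound $\alpha$. I would address this in one of two ways: either by invoking a stronger variant from Section \ref{sec:main} that allows arbitrary $\ell^1$-type decay without a smallness hypothesis, or by a coarsening trick in which one groups consecutive patches $I_j$ into larger blocks so as to inflate the local lower bound $\alpha$ while keeping the new cross-patch contributions controlled. Once the compatibility condition is met, Theorem 1 immediately yields the desired global frame bounds for $\bigcup_j S_{N_j} G_0$ on $\ell^2(\Z)$, completing the argument.
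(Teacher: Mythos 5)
The central step of your argument does not close, and you acknowledge this yourself: after disjointifying the cover into patches $J_j$ and setting up Theorem \ref{thm:l1majorized}, you are left needing $\sum_{k\in K}\|c_k\|_1<\alpha$, and the hypotheses of the proposition give no control whatsoever on this quantity. Neither escape route you sketch is available. Section \ref{sec:main} contains no variant of Theorem \ref{thm:l1majorized} that dispenses with the smallness hypothesis, and no such variant can exist at this level of generality: an off-diagonal part whose $\ell^1$-profile is large compared with $\alpha$ can annihilate the lower frame bound. The coarsening trick also does not work as stated: merging $M$ consecutive patches into a block leaves the local lower bound at $\alpha$ (the union over the block of the patch frames is a frame for the block with lower bound $\alpha$, not $M\alpha$), while vectors sitting near the block boundaries still leak into neighboring blocks by a fixed amount, so the ratio you need to improve does not improve.

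The gap stems from missing the structural feature that makes the statement comparatively easy: the elements of $G_0$ are $\chi_I a^n(\cdot-l)$, already cut off to $I$, so $g_{jk}=S_{N_j}\bigl(\chi_I a^n(\cdot-l)\bigr)$ is supported \emph{exactly} in $I_j$ --- there is no ``bounded buffer'' coming from the support of $a^n$, because the characteristic function removes it. Hence, taking $P_j$ to be the orthogonal projection onto $\ell^2(I_j)$ (overlapping, not disjointified), one has $P_jg_{jk}=g_{jk}$; the $P_j$ commute, being multiplications by characteristic functions; $P_jP_r=0$ for $|j-r|>M$ by hypothesis; the family is complete since $\cup_jI_j=\Z$; and $\{P_jg_{jk}\}_k=S_{N_j}G_0$ is a frame for $P_j\HH=\ell^2(I_j)$ with the same bounds as $G_0$ by unitarity of $S_{N_j}$. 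This is exactly the setting of Corollary \ref{CorprojFI}, which is how the paper proves the proposition (via Theorem \ref{thm:completefusion} and Proposition \ref{LGF}), and no smallness condition enters. Theorem \ref{thm:l1majorized} is the right tool for the \emph{untruncated} system $\{a^n*\delta_l\}$ treated in Theorem \ref{cor:convframe}, where a condition of the form $\gamma<\alpha$ is indeed imposed as a hypothesis rather than derived.
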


The paper is organized as follows. In Section \ref{sec:main} we prove our main results concerning general systems with local-to-global indexing. In Section \ref{sec:apptoDS}, we apply the results from Section \ref{sec:main} to iterative system of vectors arising from dynamical sampling problem.

\section {Main results for general systems}\label{sec:main}
In this section we  prove our main theorems concerning local-to-global frame properties of the systems $\{g_{jk}: j \in J, \, k\in K\}\subset \HH$  where $J$ and  $K$ are countable (finite or infinite). Here $g_{j,k}$ are some vectors, not necessarily functions on $\R$  or $\Z$. The local patches are replaced with projection operators $\{P_j\}_{j\in J} $ or more generally with operators $\{T_j\}_{j\in J} \subset B(\HH)$. Our first proposition gives a way to construct a global frame from local frames. This is a generalization of Theorem 5.3 from \cite{ACM04}.

\begin {proposition}
\label{OF}
If  $\{T_j\}_{j\in J} \subset B(\HH)$, $T_j$ has close range for each $j$, $A\|f\|^2\le \sum_j\|T_jf\|^2\le  B \|f\|^2$ for all $f  \in \HH$ for some $A,B>0$, and  the set $\{T_jg_{jk}: k \in K\}$ forms a frame for $T_j(\HH)$ with uniform frame bounds $\alpha, \beta$, then $\{T^*_jT_jg_{jk}: j \in J , k \in K\}$  is a frame for $\HH$ with frame bounds $\alpha A, \beta B$.

Conversely,  if $\{T_jg_{jk}: k \in K\}$ forms a frame for $T_j(\HH)$ with uniform frame bounds $\alpha, \beta$, and $\{T^*_jT_jg_{jk}: j \in J , k \in K\}$  is a frame for $\HH$ with frame bounds $\alpha A, \beta B$,   then 
\[
\frac \alpha \beta A\|f\|^2\le \sum_j\|T_jf\|^2\le  
\frac \beta \alpha  B \|f\|^2 \quad \text { for all } f  \in \HH.
\]
\end {proposition}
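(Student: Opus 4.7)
The plan is to reduce both statements to a single chain of inequalities obtained by expanding $|\langle f, T_j^*T_j g_{jk}\rangle|^2 = |\langle T_j f, T_j g_{jk}\rangle|^2$ using the adjoint identity, and then inserting the local frame bounds for $\{T_j g_{jk}\}_{k\in K}$ on the closed subspace $T_j(\HH)$. The closed range hypothesis is what legitimizes applying the frame inequalities to the vector $T_j f$, which lies in $T_j(\HH)$.

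For the forward direction, I would fix $f\in\HH$, use the adjoint identity above, and sum over $k$ first: since $T_j f \in T_j(\HH)$ and $\{T_j g_{jk}\}_{k}$ is a frame there with uniform bounds $\alpha,\beta$, we get
\[
\alpha \|T_j f\|^2 \le \sum_k |\langle T_j f, T_j g_{jk}\rangle|^2 \le \beta \|T_j f\|^2.
\]
Summing over $j$ and then invoking the global hypothesis $A\|f\|^2 \le \sum_j \|T_j f\|^2 \le B\|f\|^2$ sandwiches $\sum_{j,k}|\langle f, T_j^*T_j g_{jk}\rangle|^2$ between $\alpha A\|f\|^2$ and $\beta B\|f\|^2$, which is exactly the claim.

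For the converse, I would run the same two-sided computation but in the opposite direction. The same identities give
\[
\alpha \sum_j\|T_j f\|^2 \le \sum_{j,k}|\langle f, T_j^*T_j g_{jk}\rangle|^2 \le \beta \sum_j\|T_j f\|^2,
\]
while the assumed global frame bounds read
\[
\alpha A\|f\|^2 \le \sum_{j,k}|\langle f, T_j^*T_j g_{jk}\rangle|^2 \le \beta B\|f\|^2.
\]
Combining the lower global bound with the upper local bound yields $\sum_j\|T_jf\|^2 \ge (\alpha A/\beta)\|f\|^2$, and combining the upper global bound with the lower local bound yields $\sum_j\|T_jf\|^2 \le (\beta B/\alpha)\|f\|^2$. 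Together these are precisely the asserted inequalities.

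No step looks genuinely hard; the only subtlety is to be sure that $T_j f$ lies in $T_j(\HH)$ so that the local frame bounds on $T_j(\HH)$ apply, and that the double sum can be interchanged, both of which follow from Tonelli once all terms are nonnegative. The closed range assumption is natural here (frames are normally defined on closed subspaces) but is not actually needed to carry out the interchange, only to make the phrase ``frame for $T_j(\HH)$'' unambiguous.
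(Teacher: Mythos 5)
Your proposal is correct and follows essentially the same route as the paper: expand $\vert\la f,T_j^*T_jg_{jk}\ra\vert^2=\vert\la T_jf,T_jg_{jk}\ra\vert^2$, apply the local frame bounds on $T_j(\HH)$, sum over $j$, and combine with the fusion-type inequality (or, for the converse, with the assumed global frame bounds) in exactly the way the authors do.
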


\begin {proof}
Let $f \in \HH$. Then
 \[
 \alpha \|T_j f\|^2\le \sum_{k\in K} \vert \la T_jf,T_jg_{jk}\ra\vert^2=\sum_{k\in K} \vert \la f,T^*_jT_jg_{jk}\ra\vert^2.
 \] Summing over $j \in J$ we get
\[
\begin{split}
\alpha A \|f\|^2\le \alpha \sum_{j \in J} \|T_jf\|^2\le \sum_{j\in J}\sum_ {k\in K} \vert\la f, T^*_jT_jg_{jk}\ra\vert^2,
\end{split}
\]
which establishes the lower frame bound for the set $\{T^*_jT_jg_{jk}: j \in J , k \in K\}$. A similar calculation yields the upper bound.

To prove the second part,  let $f \in \HH$. Then

\[
\sum_{k\in K} \vert \la f,T^*_jT_jg_{jk}\ra\vert^2=\sum_{k\in K} \vert \la T_jf,T_jg_{jk}\ra\vert^2\le \beta \|T_j f\|^2.
\]
Summing over $j \in J$, we get
\[
\alpha A\|f\|^2\le \sum_{j \in J}\sum_{k\in K} \vert \la f,T^*_jT_jg_{jk}\ra\vert^2\le \beta \sum_{j\in J}\|T_j f\|^2.
\]
Thus, $\frac \alpha {\beta }  A\|f\|^2 \le \sum_{j\in J}\|T_j f\|^2$. Similar calculations yield the upper bound.  
\end{proof}

For the next proposition, we recall the definition of fusion frames (see \cite{CCEL15, CCL12,CKL08,KK11} and the references therein) and completeness for  a  system of orthogonal projectors.
\begin{definition} A system of orthogonal projections
$\{P_j\}_{j\in J}$ is  said to be a fusion frame if there exists positive constants $m,M>0$ such that
\begin{equation} \label {FF}
m\|f\|^2\le \sum_j\|P_jf\|^2\le M\|f\|^2, \quad f \in \HH.
\end{equation}
\end{definition}

\begin{definition}
A system of projections $\{P_j\}_{j\in \N}$ is said to be complete if $P_jf=0$ for all $j$ implies that $f=0$.
\end{definition}

Using the fact that for  orthogonal projector $P_j$, $P_j=P_j^2$ and $P_j=P_j^*$, Proposition \ref {OF}  immediately yields the following corollary:
\begin {proposition}
\label {LGF}
If  $\{P_j\}_{j\in J}$ is  a fusion frame with bounds $A,B$, and  the set $\{P_jg_{jk}: k \in K\}$ form a frame for $P_j(\HH)$ with uniform frame bounds $\alpha, \beta$, then $\{P_jg_{jk}: j \in J , k \in K\}$  is a frame for $\HH$ with frame bounds $\alpha A,\beta B$.

Conversely,  if $\{P_jg_{jk}: k \in K\}$ form a frame for $P_j(\HH)$ with uniform frame bounds $\alpha, \beta$, and $\{P_jg_{jk}: j \in J , k \in K\}$  is a frame for $\HH$ with frame bounds $\alpha A, \beta B$,   then $\{P_j\}_{j\in J}$ is a fusion frame with  bounds $\frac \alpha {\beta }  A, and \frac \beta {\alpha }  B$.
\end {proposition}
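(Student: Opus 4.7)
The plan is to derive this as a direct corollary of Proposition \ref{OF} by specializing $T_j$ to the orthogonal projector $P_j$. First I would check that the hypotheses of Proposition \ref{OF} are inherited: each orthogonal projector $P_j$ is bounded and has closed range (its range equals $\ker(\mathds{1}-P_j)$), and the fusion frame inequality \eqref{FF} is exactly the two-sided bound $A\|f\|^2 \le \sum_j \|T_j f\|^2 \le B\|f\|^2$ required by Proposition \ref{OF} with $T_j = P_j$. The local frame hypothesis $\{P_j g_{jk}: k \in K\}$ being a frame for $P_j(\HH)$ with uniform bounds $\alpha,\beta$ matches the remaining hypothesis of Proposition \ref{OF} verbatim.

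Next I would invoke the two defining identities for an orthogonal projector, $P_j^2 = P_j$ and $P_j^* = P_j$, to simplify
\[
T_j^* T_j g_{jk} = P_j^* P_j g_{jk} = P_j^2 g_{jk} = P_j g_{jk}.
\]
Substituting this into the conclusion of the forward direction of Proposition \ref{OF} immediately gives that $\{P_j g_{jk}: j\in J, k\in K\}$ is a frame for $\HH$ with bounds $\alpha A$ and $\beta B$, which is the first assertion.

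For the converse, I would again set $T_j = P_j$ so that $T_j^* T_j g_{jk} = P_j g_{jk}$, and then the second half of Proposition \ref{OF} yields directly
\[
\frac{\alpha}{\beta} A \|f\|^2 \le \sum_{j\in J} \|P_j f\|^2 \le \frac{\beta}{\alpha} B \|f\|^2 \quad \text{for all } f \in \HH,
\]
which is precisely the fusion frame inequality with bounds $\tfrac{\alpha A}{\beta}$ and $\tfrac{\beta B}{\alpha}$.

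There is essentially no obstacle here: the whole argument is a substitution $T_j \mapsto P_j$ plus the identity $P_j^* P_j = P_j$. The only minor point worth stating explicitly (and which I would mention in a single sentence) is that orthogonal projectors automatically have closed range, so the "closed range" hypothesis of Proposition \ref{OF} is vacuous in this setting. Everything else is a direct citation.
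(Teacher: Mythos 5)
Your proposal is correct and is exactly the paper's argument: the paper derives this proposition from Proposition \ref{OF} by noting that for an orthogonal projector $P_j^*P_j = P_j^2 = P_j$, so the system $\{T_j^*T_jg_{jk}\}$ reduces to $\{P_jg_{jk}\}$ and both directions follow by direct substitution. Your explicit remark that orthogonal projectors automatically have closed range is a small but accurate addition that the paper leaves implicit.
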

%
%
%
%

\begin {theorem}\label{thm:completefusion}
Assume that  $\{P_j\}_{j\in \N}$ is such that $P_jP_k=P_kP_j$ for all $j,k\in \N$, and $P_jP_k=0$ for $|j-k|\ge \I$ for some positive integer $\I$. If $\{P_j\}_{j\in \N}$ is complete, then $\{P_j\}_{j\in \N}$ is a fusion frame.  
\end {theorem}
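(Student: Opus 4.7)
The plan is to partition $\N$ into $M$ arithmetic progressions so that within each progression the projections have mutually orthogonal ranges, collapsing the infinite family $\{P_j\}$ into a finite commuting family of orthogonal projections. Concretely, set $J_r := \{r + nM : n = 0, 1, 2, \dots\}$ for $r = 0, \dots, M-1$, so that $\N = \bigsqcup_{r=0}^{M-1} J_r$. For distinct $j,k \in J_r$ one has $|j-k|\ge M$, hence $P_jP_k = 0$; since the $P_j$ are orthogonal projections this says the ranges $P_j(\HH)$ are pairwise orthogonal within each $J_r$. Bessel's inequality then gives $\sum_{j\in J_r}\|P_jf\|^2 \le \|f\|^2$, and the series $Q_rf := \sum_{j\in J_r} P_jf$ converges unconditionally to the orthogonal projection of $f$ onto $V_r := \overline{\bigoplus_{j\in J_r} P_j(\HH)}$, with $\|Q_rf\|^2 = \sum_{j\in J_r}\|P_jf\|^2$. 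Summing over $r$,
\[
\sum_{j\in\N}\|P_jf\|^2 \;=\; \sum_{r=0}^{M-1}\|Q_rf\|^2,
\]
and the upper fusion-frame bound is immediate: $\sum_{r=0}^{M-1}\|Q_rf\|^2 \le M\|f\|^2$.

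For the lower bound I first need that $\{Q_r\}_{r=0}^{M-1}$ is itself a commuting family of orthogonal projections; this follows from the pairwise commutativity of the $P_j$ by passing bounded operators through the convergent subseries (equivalently, each $Q_r$ lies in the commutative von Neumann algebra generated by $\{P_j\}$). The family $\{Q_r\}$ is complete: if $Q_rf = 0$ for every $r$, then $\sum_{j\in J_r}\|P_jf\|^2 = 0$ for each $r$, so $P_jf = 0$ for all $j$, and completeness of $\{P_j\}$ gives $f=0$. For a finite commuting family of orthogonal projections, completeness is equivalent to $\bigvee_{r=0}^{M-1} Q_r = I$.

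It then suffices to prove the operator inequality
\[
\sum_{r=0}^{M-1} Q_r \;\ge\; \bigvee_{r=0}^{M-1} Q_r \;=\; I,
\]
which upon testing against $f$ gives $\sum_j \|P_jf\|^2 = \langle(\textstyle\sum_r Q_r)f, f\rangle \ge \|f\|^2$, i.e.\ the lower bound $1$. I would prove the inequality by induction on $M$: the base case $M=2$ is the identity $Q_1+Q_2 = Q_1\vee Q_2 + Q_1\wedge Q_2$ valid for commuting orthogonal projections (with $Q_1\wedge Q_2 = Q_1Q_2 \ge 0$), and the inductive step applies the same two-projection identity to $Q_M$ and $R := \bigvee_{r<M}Q_r$, which commute.

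The main care required is in the manipulations of the infinite subseries defining $Q_r$: unconditional convergence (from mutual range-orthogonality and Bessel), the fact that $Q_r$ is the orthogonal projection onto $V_r$, and the commutativity $Q_rQ_s = Q_sQ_r$. None is individually deep, but each calls for an explicit argument. An alternative to the inductive proof of $\sum_r Q_r \ge \bigvee_r Q_r$ is simultaneous diagonalization of commuting orthogonal projections via the spectral theorem for commutative von Neumann algebras, which reduces the inequality to the pointwise fact $\sum_r \chi_{E_r}(x) \ge \chi_{\bigcup_r E_r}(x)$.
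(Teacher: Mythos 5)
Your proof is correct, and your upper bound coincides with the paper's: both split $\{P_j\}$ into $M$ subfamilies along residue classes mod $M$, within each of which the ranges are pairwise orthogonal, so each subfamily sums to an orthogonal projection $Q_r$ and $\sum_j\|P_jf\|^2=\sum_{r}\|Q_rf\|^2\le M\|f\|^2$. (Your rendering of this step is the cleaner one; the paper's display bounds $\|\sum_jP_jf\|^2$ rather than $\sum_j\|P_jf\|^2$, which is evidently a slip.) For the lower bound the two arguments genuinely diverge. The paper runs a Gram--Schmidt-type orthogonalization on the whole infinite family: $Q_1=P_1$ and $Q_n=P_n-P_n\sum_{k<n}Q_k$, producing mutually orthogonal projections satisfying $\sum_n\|Q_nf\|^2\le\sum_n\|P_nf\|^2$ and inheriting completeness, whence $\|f\|^2=\sum_n\|Q_nf\|^2\le\sum_n\|P_nf\|^2$; notably this half of the paper's proof never uses the band condition $P_jP_k=0$ for $|j-k|\ge M$, so it establishes the lower bound $1$ for \emph{any} complete commuting family of orthogonal projections. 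You instead exploit the band condition a second time to collapse everything onto the finite commuting family $\{Q_r\}_{r=0}^{M-1}$, note that completeness forces $\bigvee_r Q_r=\Id$, and invoke the projection-lattice identity $Q+Q'=Q\vee Q'+Q\wedge Q'$ (with $Q\wedge Q'=QQ'\ge 0$ for commuting projections), iterated by induction or read off from simultaneous diagonalization, to get $\sum_r Q_r\ge\bigvee_r Q_r=\Id$. Your route buys a shorter and more structural argument in which all the infinite-dimensional bookkeeping is confined to the routine verification that each $Q_r$ is an orthogonal projection commuting with the others; the paper's route buys a marginally stronger statement (lower bound without the band hypothesis) at the cost of a more delicate induction. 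Both are sound.
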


\begin {proof}
The set $\{P_j\}_{j\in \N}$ is the union of $\I$ sets $\{P_j\}_{j\in \N}=\bigcup\limits_{k=1}^\I\{P_{k+\I j}\}_{j\in \N}$. Using the fact that for each $k$,  $\sum\limits_{j \in \N} P_{k+ \I j}$  is an orthogonal projection, we get that for  $f \in \HH$ 

\begin {align*}
\|\sum\limits_{j \in \N} P_{j}f\|^2&= \|\sum\limits_{k=1}^\I \sum\limits_{j \in \N} P_{k+\I j}f\|^2\\
&\le\sum\limits_{k=1}^\I\|\sum\limits_{j \in \N} P_{k+ \I j}f\|^2\\
&\le \I \|f\|^2.
\end{align*}
To prove the lower bound we proceed by induction. We first note that, because $P_1,P_2$ commute, $P_1P_2$ is an orthogonal projector on $P_1(\HH)\cap P_2(\HH)$. Let $Q_1=P_1$, $Q_2=P_2-P_2Q_1$. Then,  $Q_2^*=Q_2$, $Q_2^2=Q_2$,  and $Q_1Q_2=0$. Moreover,  since $P_1Q_2=0$, we get 
\[
\|Q_1f\|^2+\|Q_2f\|^2= \|P_1f\|^2+\|(P_2-P_2P_1)f\|^2=\|P_1f\|^2+\|(\Id-P_1)P_2f\|^2\le \|P_1f\|^2+\|P_2f\|^2.
\]
For $n>2$, let $Q_n=P_n-P_n\sum\limits_{k=1}^{n-1}Q_k$. Then, using the fact that $R=\sum\limits_{k=1}^{n-1}Q_k$ commutes with $P_n$, it is not difficult to verify that $Q^*_n=Q_n$, and $Q_n^2=Q_n$ and that  $Q_nR=0$.  Assume that $Q_jQ_k=0$ for  $j,k<n$. Then, applying $Q_l$ to both side of the equation  $Q_nR=0$, we get that $Q_nQ_l=0$ for all $l<n$. Thus, $Q_jQ_k=0$ for $j\ne k$.  Using induction again, we have 

\[\|Rf\|^2+\|Q_nf\|^2= \sum\limits_{j=1}^{n-1} \|Q_jf\|^2+ \|(\Id-R)P_n f\|^2\le \sum\limits_{j=1}^{n-1} \|P_jf\|^2+ \|P_nf\|^2.\]
Hence 
\[
\sum\limits_{j\in \N}\|Q_jf\|^2\le \sum\limits_{j\in \N}\|P_jf\|^2.
\]
Since $Q_1=P_1$, $Q_n=P_n-P_nR$ for $n\ge 2$, we get that $\{Q_j\}$ is also complete whenever $P_j$ is complete. Hence, 
\[
\|f\|^2= \sum\limits_{j\in \N}\|Q_jf\|^2\le \sum\limits_{j\in \N}\|P_jf\|^2.
\]
\end{proof}
 Combining Proposition \ref {LGF} with Theorem \ref{thm:completefusion}, we  obtain the following corollary. 
\begin {corollary}
\label {CorprojFI}
Assume that  $\{P_j\}_{j\N}$ is complete and is such that $P_jP_k=P_kP_j$ for all $j,k\in \N$, and $P_jP_k=0$ for $|j-k|\ge M$ for some positive integer $M$. Furthermore assume that $\{P_jg_{jk}: k \in K\}$ from a frame for $P_j\HH$ with uniform frame bounds $\alpha, \beta$ and that $P_jg_{jk}=g_{jk}$ for all $j,k$. Then,  the set $\{g_{jk}\}_{ j\in J, k\in K}$ is a frame for $\HH$.
\end {corollary}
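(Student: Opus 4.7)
The proof plan is essentially to chain together the two previous results, so the argument should be very short; the entire content of the corollary is that its hypotheses are tailored to invoke both of them in sequence.

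First I would verify that the assumptions on $\{P_j\}_{j\in\N}$ are exactly what Theorem \ref{thm:completefusion} requires: namely commutativity $P_jP_k=P_kP_j$, the ``finite overlap'' condition $P_jP_k=0$ for $|j-k|\ge M$, and completeness. Applying that theorem, I conclude that $\{P_j\}_{j\in\N}$ is a fusion frame, so there exist constants $A,B>0$ with
\[
A\|f\|^2 \le \sum_{j\in\N}\|P_jf\|^2 \le B\|f\|^2, \qquad f\in \HH.
\]

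Next I would plug this into Proposition \ref{LGF}. The other hypothesis of that proposition is also given to us: $\{P_jg_{jk}:k\in K\}$ is a frame for $P_j\HH$ with uniform bounds $\alpha,\beta$. The conclusion is that the combined system $\{P_jg_{jk}:j\in J,\,k\in K\}$ is a frame for $\HH$ with bounds $\alpha A$ and $\beta B$.

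Finally, the hypothesis $P_jg_{jk}=g_{jk}$ means that the frame we just produced is literally the system $\{g_{jk}\}_{j\in J,k\in K}$, so this system is a frame for $\HH$ with bounds $\alpha A$ and $\beta B$. I do not anticipate a real obstacle: the only thing to check is that the hypotheses of Theorem \ref{thm:completefusion} and Proposition \ref{LGF} match exactly what is assumed in the corollary, and that the identity $P_jg_{jk}=g_{jk}$ lets us erase the projector from the final frame.
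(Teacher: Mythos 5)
Your proposal is correct and is exactly the paper's argument: the authors obtain the corollary by "combining Proposition \ref{LGF} with Theorem \ref{thm:completefusion}," i.e.\ the completeness, commutativity, and finite-overlap hypotheses yield the fusion frame property, Proposition \ref{LGF} then gives that $\{P_jg_{jk}\}_{j,k}$ is a frame with bounds $\alpha A,\beta B$, and the hypothesis $P_jg_{jk}=g_{jk}$ identifies that system with $\{g_{jk}\}_{j,k}$.
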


In the previous corollary the assumption that $P_jg_{jk}=g_{jk}$ is rather strong and does not always happen in practice. The next  theorem offers a way to overcome this assumption.

\begin {theorem} \label{thm:l1majorized}Let $\{P_j\}_{j\N}$ be orthogonal projectors satisfying $P_rP_j=0$ for $r\ne j$, and $\sum_jP_j=\Id$. Assume that $\{P_jg_{jk}: k \in K \}$ form a frame for $P_j\HH$ with uniform frame bounds $\alpha, \beta$, and $\|P_rg_{jk}\|\le |c_k(j-r)|$ with  $c_k \in \ell^1(\Z)$. If $\sum\limits_{k=1}^K\|c_k\|_1< \alpha$, then $\{g_{jk}\}_{ j\in J, k\in K}$ is a frame for $\HH$  with frame bounds $\alpha-\sum\limits_{k=1}^K\|c_k\|_1$ and $\beta+\sum\limits_{k=1}^K\|c_k\|_1$. 
\end {theorem}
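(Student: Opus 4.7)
The strategy is to view $\{g_{jk}\}$ as a perturbation of the ``projected'' system $\{P_jg_{jk}\}$ (which lives inside a single patch per $j$) and control the perturbation by the $\ell^1$-majorants $c_k$. First, I would exploit the hypotheses $P_rP_j=0$ for $r\ne j$ and $\sum_rP_r=\mathds{1}$: these give orthogonal decompositions $f=\sum_rP_rf$ and $g_{jk}=\sum_rP_rg_{jk}$, and orthogonality of the ranges collapses the double sum to
\[
\langle f,g_{jk}\rangle = \sum_r \langle P_rf,P_rg_{jk}\rangle = a_{jk}+b_{jk},
\]
where $a_{jk}:=\langle P_jf,P_jg_{jk}\rangle$ is the ``diagonal'' contribution tied to the local frame hypothesis and $b_{jk}:=\sum_{r\ne j}\langle P_rf,P_rg_{jk}\rangle$ is the off-diagonal tail that needs to be absorbed.

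For the diagonal piece, I would apply the uniform local frame bound patch by patch, $\alpha\|P_jf\|^2\le \sum_k|a_{jk}|^2\le \beta\|P_jf\|^2$, and then sum over $j$ using the Parseval-type identity $\sum_j\|P_jf\|^2=\|f\|^2$ (which follows directly from the assumptions on $\{P_j\}$) to obtain
\[
\alpha\|f\|^2 \le \sum_{j,k}|a_{jk}|^2 \le \beta\|f\|^2.
\]
This already shows that $\{P_jg_{jk}\}$ itself is a frame for $\mathcal{H}$ with bounds $\alpha,\beta$; the result is essentially Proposition \ref{LGF} applied to the trivial fusion frame $\{P_j\}$.

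For the off-diagonal piece, Cauchy--Schwarz combined with the hypothesis $\|P_rg_{jk}\|\le|c_k(j-r)|$ yields
\[
|b_{jk}| \le \sum_r \|P_rf\|\,|c_k(j-r)| = (u*|c_k|)(j),
\qquad u(r):=\|P_rf\|,
\]
so the tail is pointwise dominated by a convolution. Since $\|u\|_{\ell^2}=\|f\|$, Young's convolution inequality gives $\bigl(\sum_j|b_{jk}|^2\bigr)^{1/2}\le \|f\|\,\|c_k\|_1$, and then summing over $k$ (via the embedding $\ell^1(K)\hookrightarrow\ell^2(K)$, or Minkowski's inequality) yields
\[
\Bigl(\sum_{j,k}|b_{jk}|^2\Bigr)^{1/2} \;\le\; \|f\|\sum_{k\in K}\|c_k\|_1.
\]

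Finally, the triangle and reverse-triangle inequalities in $\ell^2(J\times K)$, applied to $\langle f,g_{jk}\rangle=a_{jk}+b_{jk}$, combine the two estimates into the desired two-sided bounds on $\sum_{j,k}|\langle f,g_{jk}\rangle|^2$, with positivity of the lower bound guaranteed by the smallness assumption on $\sum_k\|c_k\|_1$. I expect the main obstacle to be the off-diagonal step: one must recognize the convolution structure hidden in $b_{jk}$ so that Young's inequality applies with the right powers, and keep careful track of the $\ell^1$ vs.\ $\ell^2$ summation in $k$ so that no unnecessary loss enters the final constants.
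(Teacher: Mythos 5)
Your proposal is correct and follows essentially the same route as the paper's own proof: split $\la f,g_{jk}\ra$ into the diagonal term $\la P_jf,P_jg_{jk}\ra$ and the off-diagonal tail, apply the (reverse) triangle inequality in $\ell^2(J\times K)$, handle the diagonal part with the uniform local frame bounds together with $\sum_j\|P_jf\|^2=\|f\|^2$, and control the tail by Cauchy--Schwarz and Young's convolution inequality, exactly as in the paper. (Like the paper's computation, your argument actually yields the lower bound $\bigl(\sqrt{\alpha}-\sum_{k}\|c_k\|_1\bigr)^2\|f\|^2$ rather than literally $\bigl(\alpha-\sum_{k}\|c_k\|_1\bigr)\|f\|^2$, so the mismatch with the stated constants is inherited from the original, not a defect of your approach.)
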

\begin {proof} For the lower bound we have
\begin{align*}  
\Big({\sum\limits_{j,k}\vert\la f,g_{jk}\ra\vert^2}\Big)^{1/2}&=\Big({\sum\limits_{j,k}\big\vert\la \sum_lP_lf,g_{jk}
\ra\big\vert^2}\Big)^{1/2}\\
&=\Big({\sum\limits_{j,k}\big\vert{\la P_jf,g_{jk}\ra+ \la \sum_{l\ne j}P_lf,g_{jk}
\ra}\big\vert^2}\Big)^{1/2}\\
&\ge \Big({\sum\limits_{j,k}\big\vert{\la P_jf,g_{jk}\ra}\big\vert^2}\Big)^{1/2}- \Big({\sum\limits_{j,k}\big\vert{\la \sum_{l\ne j}P_lf,g_{jk}\ra}\big\vert^2}\Big)^{1/2}\\
&\ge \Big({\alpha\sum_j \|P_jf\|^2}\Big)^{1/2}-\Big({\sum\limits_{j,k}\big\vert{ \sum_{l\ne j}\|P_lf\||c_k(l-j)|}\big\vert^2}\Big)^{1/2}\\
&= \alpha^{1/2}\|f\|-\Big({\sum\limits_{j,k}\big\vert{ \sum_{l\ne j}\|P_lf\||c_k(l-j)|}\big\vert^2}\Big)^{1/2}\\
&\ge \alpha^{1/2}\|f\|-\Big({\sum\limits_{k} \sum_{l}\|P_lf\|^2\|c_k\|_1^2}\Big)^{1/2}\\
&= \alpha^{1/2}\|f\|-\Big({\sum\limits_{k} \|c_k\|_1^2}\Big)^{1/2}\|f\|.
\end {align*}
A similar calculation gives the upper bound. 
\end {proof}

\section{Application to DS}\label{sec:apptoDS}
In this section we apply our results from Section \ref{sec:main} to the dynamical sampling setting. As a direct corollary of the Theorem \ref{thm:l1majorized}, we get the following theorem.
\begin {theorem}\label{cor:convframe}
Let $a\in \ell_1(\Z)$ be such that for a set $\Omega\subset I=\{0,\dots,N\}$, the system $\{a^k*\delta_i\vert_I: \; i\in \Omega, k=0,\dots,K\}$ is a frame in the space $\ell^2(I)$ with frame bounds $\alpha$ and $\beta$ where  $a^k$ denotes the $k$ times convolution of $a$. Let $I_j=I-jN$ and $\Omega_j=\Omega-jN$. If
\[\gamma=\sum\limits_{i\in\Omega}\sum\limits_{k=0,\dots,K}\sum\limits_{j\in\Z\setminus \{0\}}\|a^k*\delta_i\vert_{I_j}\|_{\ell^2(I_j)}<\alpha\] 
then the system $\{a^k*\delta_i:\; i\in \cup_{j\in \Z}\Omega_j, k=0,\dots,K\}$ is a frame for $\ell^2(\Z)$ with frame bounds $\alpha-\gamma$ and $\beta+\gamma$.
\end{theorem}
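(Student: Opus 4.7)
The plan is to recognize Theorem \ref{cor:convframe} as a direct specialization of Theorem \ref{thm:l1majorized}, using translation-invariance of convolution to transplant the assumed local frame on $\ell^2(I)$ onto every patch $\ell^2(I_j)$ and to produce the required $\ell^1$-type decay of cross-patch norms. Concretely, I would work in $\HH = \ell^2(\Z)$ with $P_j$ the orthogonal projection onto $\ell^2(I_j)$ (treating $\{I_j\}_{j\in\Z}$ as a disjoint partition of $\Z$, as discussed below). The outer index is $j \in \Z$, the inner index is $(i,n) \in \Omega \times \{0,\dots,K\}$, and the frame elements are
\[
g_{j,(i,n)} \;=\; a^n * \delta_{i - jN},
\]
so that $\{g_{j,(i,n)}\}_{j,(i,n)}$ is exactly $\{a^n * \delta_{i'} : i' \in \bigcup_j \Omega_j,\; n = 0,\dots,K\}$, which is the system the theorem concerns.

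For the local-frame hypothesis, the shift $S_{-jN}$ is a unitary isomorphism $\ell^2(I) \to \ell^2(I_j)$ that sends $a^n * \delta_i|_I$ to $P_j g_{j,(i,n)} = (a^n * \delta_{i - jN})|_{I_j}$, so the assumed bounds $\alpha, \beta$ for $\{a^n * \delta_i|_I\}$ on $\ell^2(I)$ transfer verbatim to uniform local bounds $\alpha, \beta$ for $\{P_j g_{j,(i,n)}\}$ on each $P_j \HH = \ell^2(I_j)$. For the decay hypothesis, a straightforward index change gives
\[
\|P_r g_{j,(i,n)}\|_{\ell^2(I_r)} \;=\; \|a^n * \delta_i|_{I_{r-j}}\|_{\ell^2(I_{r-j})},
\]
so setting $c_{(i,n)}(m) := \|a^n * \delta_i|_{I_{-m}}\|_{\ell^2(I_{-m})}$ produces $\|P_r g_{j,(i,n)}\| = c_{(i,n)}(j - r)$, and the corollary's hypothesis $\gamma < \alpha$ rephrases as $\sum_{(i,n)} \sum_{m \neq 0} c_{(i,n)}(m) < \alpha$. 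This off-diagonal sum is the only quantity that actually enters the cross-term estimate in the proof of Theorem \ref{thm:l1majorized} (the inner sum there runs over $l \neq j$, so the diagonal value $c_k(0)$ never appears), and Theorem \ref{thm:l1majorized} therefore yields the asserted global frame bounds $\alpha - \gamma$ and $\beta + \gamma$.

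The one technical nuisance to attend to is that $I = \{0,\dots,N\}$ has $N+1$ elements, so the shifts $I - jN$ and $I - (j+1)N$ overlap in a single endpoint and the literal $P_j = \chi_{I_j}$ do not satisfy $P_r P_j = 0$ for $r \neq j$. I would absorb this by working with a disjoint refinement $\tilde I_j$ of each $I_j$ (equivalently reading $|I| = N$); this costs nothing in the local frame bounds or in the decay majorants, and Theorem \ref{thm:l1majorized} then applies verbatim to give the conclusion.
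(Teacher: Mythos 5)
Your proposal is correct and follows exactly the route the paper intends: the paper gives no explicit proof, merely declaring the result "a direct corollary of Theorem \ref{thm:l1majorized}", and your writeup is the correct instantiation of that (projections onto the patches $I_j$, shift-invariance transferring the local frame bounds, and $c_{(i,n)}(m)=\|a^n*\delta_i|_{I_{-m}}\|$ as the majorizing sequences). Your two technical remarks — that consecutive $I_j$ overlap in one endpoint so a disjoint refinement is needed, and that only the off-diagonal part of $\|c_k\|_1$ enters the proof of Theorem \ref{thm:l1majorized}, which is why $\gamma$ may exclude $j=0$ — are both accurate and patch small gaps the paper glosses over.
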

To insure the assumptions in the above theorem are realistic, below we construct an  example where they actually hold.
\begin{example}
Let $I=\{0,1,2\}$, $\Omega=\{0\}$ and 
\[a(i)=
\begin{cases}
1,&i=0\\
\tau,& i=1\\
\tau^2,& i=2\\
0,&\text{otherwise}
\end{cases}
\]
where $\tau>0$. Then notice that 
\[a*a(i)=
\begin{cases}
1,&i=0\\
2\tau,& i=1\\
3\tau^2,& i=2\\
2\tau^3,& i=3\\
\tau^4,& i=4\\
0,&\text{otherwise}
\end{cases}.
\]
Hence 
\[
\{a^k*\delta_0\vert_I:k=0,1,2\}=\{(1,0,0),(1,\tau,\tau^2),(1,2\tau,3\tau^2)\}
\]
which are linearly independent for $\tau>0$ since for the matrix
\[
\Phi=\left(\begin{matrix}
1&0&0\\
1 &\tau &\tau^2\\
1 &2\tau&3\tau^2\\
\end{matrix}\right)
\]
$\det(\Phi)=5\tau^3$
and hence are a basis in $\ell^2(I)$.

The quantity $\gamma$ in Theorem \ref{cor:convframe} is equal to $2\tau^3+\tau^4$.
For sufficiently small values of $\tau$, it can be checked that the lower frame bound of the system, which is the smallest singular value of $\Phi$ is larger than $\gamma$ and thus the conditions of Theorem  \ref{cor:convframe} are satisfied.  For the value of $\tau=0.1$, the lower frame bound is approximately $0.0040$ whereas $\gamma=0.0021$.
\end{example}

The next two propositions concern frames from dynamical sampling from  diagonal and convolutional operators, and their proofs  follow from Corollary \ref {CorprojFI}.
\begin {proposition}\label{prop:diag}
Let $D$ be an infinite  diagonal matrix acting on $\ell^2(\N)$. Assume that $D
\in B(\ell^2)$. Let $\{I_j\}_{j\in \N}\subset \N$ be such that, $\vert I_j\vert\le L$, $\cup_jI_j=\N$, and $I_j\cap I_k=\emptyset$ for  $\vert j-k\vert>M$. Let $D_j$ be the submatrix of $D$ indexed by $I_j$, and let $G_j \subset \ell^2(\N)$ be a set vectors such that, for any $g \in G_j$, $\supp g\subset I_j$ and $\{D_j^ng\}_{g \in G_j, 0\le n< L}$  is a frame for $\ell^2(I_j)$ with frame bounds $\alpha, \beta$ independent of $j$. Then the set $\{D^ng: 0\le n< L, g \in \cup_{j\in\N} G_j\}$ is a frame for $\ell^2(
\N)$.
\end {proposition}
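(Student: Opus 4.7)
The plan is to reduce the statement to a direct application of Corollary \ref{CorprojFI}. The natural choice of projections is $P_j = \chi_{I_j}\cdot$, multiplication by the characteristic function of $I_j$, viewed as an orthogonal projection on $\ell^2(\N)$ onto the coordinate subspace indexed by $I_j$.

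First I would verify the structural hypotheses on $\{P_j\}$ required by the corollary. Commutativity $P_jP_k = P_kP_j$ is automatic since both sides equal multiplication by $\chi_{I_j \cap I_k}$. The disjointness assumption $I_j \cap I_k = \emptyset$ for $|j-k|>M$ immediately gives $P_jP_k = 0$ in that range. Completeness of $\{P_j\}_{j\in\N}$ follows from $\cup_j I_j = \N$: if $P_j f = 0$ for every $j$, then $f$ vanishes on each coordinate of $\N$, hence $f=0$.

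Next I would encode the dynamical vectors into the abstract framework $\{g_{jk}\}$. For each $j$, enumerate $\{D^n g : g \in G_j,\ 0\le n<L\}$ as $\{g_{jk}\}_{k\in K}$ for a common countable index set $K$ (the number of vectors per $j$ is at most $|G_j|\cdot L$; if necessary one pads with zeros to make $K$ uniform across $j$). The key structural identity to exploit is that $D$ is diagonal, so $\supp(D^n g) \subset \supp g \subset I_j$ for every $g \in G_j$ and every $n\ge 0$. This yields $P_j g_{jk} = g_{jk}$, fulfilling the remaining hypothesis of Corollary \ref{CorprojFI}.

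Finally I would transfer the local frame bounds. Under the canonical isometry between $P_j\ell^2(\N)$ and $\ell^2(I_j)$, the vector $D^n g$ corresponds exactly to $D_j^n(g|_{I_j})$, because $D$ is diagonal and $g$ is supported in $I_j$. Hence the uniform frame bounds $\alpha,\beta$ for $\{D_j^n g\}_{g\in G_j,\,0\le n<L}$ on $\ell^2(I_j)$ transfer verbatim to $\{P_j g_{jk}\}_{k\in K}$ on $P_j\HH$, and Corollary \ref{CorprojFI} delivers the frame property of $\{g_{jk}\}_{j,k}$, i.e.\ of $\{D^n g : 0\le n<L,\ g\in \cup_j G_j\}$, on $\ell^2(\N)$. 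I expect no serious obstacle here; the only delicate step is the bookkeeping in aligning the $j$-local index sets into a single $K$ and ensuring that possible coincidences between vectors in different $G_j$ do not disturb the frame bounds (they only inflate them, which is harmless).
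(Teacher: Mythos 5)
Your proof is correct and takes exactly the route the paper intends: the paper gives no details beyond asserting that the proposition ``follows from Corollary~\ref{CorprojFI}'', and your argument (coordinate projections $P_j=\chi_{I_j}\cdot$, which commute, satisfy $P_jP_k=0$ for $|j-k|>M$, are complete since $\cup_j I_j=\N$, and satisfy $P_jD^ng=D^ng$ because $D$ is diagonal, with the local bounds transferring via the isometry $P_j\ell^2(\N)\cong\ell^2(I_j)$) is precisely that deduction.
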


As a specific instance let us consider the next two examples. 
\begin{example}
Assume that  the entries of the diagonal matrix $D$ are periodic on the diagonal, i.e., $ D_{ii}=f(i)$ where $f(i+p)=f(i)$ for $i \in \N$. This case reduces, each submatrix $D_j$ of $D$ in the proposition to be  one of finitely many possible matrices. For each of these matrices, we construct corresponding $G_j$. Since the construction uses finitely many matrices and finitely many associated vectors, we can find frame bounds $\alpha, \beta$ that are  independent of $j$ and the conditions in Proposition \ref{prop:diag} will be satisfied. 
\end{example}


The next proposition concerns convolutional operators which covers a large class of operators of practical interest  like the diffusion operator on $\ell^2(\Z)$.
\begin {proposition}
Let $a$ be a finite sequence, and consider the convolution operator $Af=a\ast f$ for $f \in \ell^2(\Z)$. Let $I=\{-N,\dots,N\}$, $\Omega \subset I$, and $I_j=I+N_j\subset \Z$ be such that $\cup_jI_j=\Z$, and $I_j\cap I_k=\emptyset$ for   $\vert j-k\vert >M$. Assume that $G_0=\{\chi_{I_j} a^n(\cdot-l): l\in \Omega\}$  is a frame for $
\ell ^2(I)$  ($\chi_{I_j}$ is the characteristic function of $I_j$), and let $S_k$ be the shift  operator by  the integer $k$. Then $\cup_j S_{N_j}G_0$   is a frame for $\ell^2 (\N)$. 
\end {proposition}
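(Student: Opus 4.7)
The plan is to apply Corollary \ref{CorprojFI} directly, with the orthogonal projectors $P_j:\ell^2(\Z)\to\ell^2(\Z)$ taken to be multiplication by the characteristic function $\chi_{I_j}$, and the local vectors $g_{j,k}$ taken to be the elements of the translated system $S_{N_j}G_0$ (reading $G_0$ as the base system on $I=I_0$, so that $S_{N_j}G_0$ produces the $I_j$-supported vectors obtained by shifting by $N_j$, indexed by $k=(n,l)$ with $l\in\Omega$). With this choice every hypothesis of the corollary should drop out directly from the data of the proposition.

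First I would verify the structural hypotheses on $\{P_j\}$. Each $P_j$ is a multiplication operator, so any two commute and $P_jP_k=\chi_{I_j\cap I_k}=P_kP_j$; in particular $P_jP_k=0$ whenever $I_j\cap I_k=\emptyset$, which by assumption holds for $|j-k|>M$. Completeness of $\{P_j\}$ is immediate from $\bigcup_j I_j=\Z$: if $\chi_{I_j}f=0$ for every $j$, then $f$ vanishes on every $I_j$ and hence on all of $\Z$.

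Next I would verify the uniform local frame bound together with the support condition $P_jg_{j,k}=g_{j,k}$. The key observation is that $S_{N_j}$ is a unitary operator on $\ell^2(\Z)$ that restricts to an isometry from $\ell^2(I)=P_0\HH$ onto $\ell^2(I_j)=P_j\HH$. Since unitary maps preserve frame bounds, the translated system $S_{N_j}G_0$ is a frame for $P_j\HH$ with the same bounds $\alpha,\beta$ as $G_0$, and these bounds are therefore uniform in $j$. Moreover each generator of $G_0$ is a function supported in $I$, so every $g_{j,k}=S_{N_j}(\chi_I a^n(\cdot-l))$ is supported in $I_j$, giving $P_jg_{j,k}=g_{j,k}$. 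With all hypotheses of Corollary \ref{CorprojFI} verified, the conclusion that $\bigcup_j S_{N_j}G_0$ is a frame for $\ell^2(\Z)$ follows at once.

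I do not expect a genuine obstacle beyond fixing the dictionary between the indexing in the proposition and that of the corollary. Once that is set up, translation invariance automatically upgrades the single assumed local frame into uniformly bounded local frames, and the overlapping-patch issue for neighbouring $|j-k|\le M$ (where $I_j\cap I_k$ may be nonempty) is already absorbed inside Corollary \ref{CorprojFI} via the completeness-to-fusion-frame step of Theorem \ref{thm:completefusion}. The only mild subtlety in writing the proof cleanly is notational: making precise that $G_0$ is understood as the base system on $I_0$ and that the uniformity of the local bounds is exactly unitary invariance of frame bounds under $S_{N_j}$.
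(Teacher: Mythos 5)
Your proposal is correct and is exactly the route the paper intends: the paper gives no written proof of this proposition beyond the remark that it ``follows from Corollary \ref{CorprojFI},'' and your verification of that corollary's hypotheses (commuting multiplication projectors $P_j=\chi_{I_j}\cdot$, completeness from $\bigcup_j I_j=\Z$, finite overlap from $|j-k|>M$, and uniform local bounds via unitarity of $S_{N_j}$) is the intended argument. You also sensibly resolve the statement's typographical slips ($\chi_{I_j}$ read as $\chi_I$ on the base patch, and the conclusion read as a frame for $\ell^2(\Z)$), which is the only ``work'' the paper leaves implicit.
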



\section*{Acknowledgments}
 The research of A. Aldroubi is supported in part by NSF grant DMS- 1322099. C. Cabrelli and U. Molter were partially supported by grants: UBACyT 20020170100430BA, PICT 2014-1480 (ANPCyT) and CONICET PIP 11220110101018. A. Petrosyan acknowledges support   by the Oak Ridge National Laboratory, which is operated by UT-Battelle, LLC., for the U.S. Department of Energy under Contract DE-AC05-00OR22725. 


\end{document}